\documentclass{birkjour}

\def\gr{\operatorname{gr}}

\def\id{\operatorname{id}}

\def\red{\operatorname{red}}

\def\pr{\operatorname{pr}}

\def\mod{\operatorname{mod}}

 \newtheorem{thm}{Theorem}[section]
 
 \newtheorem{lem}[thm]{Lemma}
 
 \theoremstyle{definition}
 
 \theoremstyle{remark}

 \numberwithin{equation}{section}

\begin{document}

\title[A classification theorem]
 {A classification theorem and a spectral\\
  sequence for a locally
  free sheaf\\
  cohomology of a
supermanifold}

\author[E.G.~Vishnyakova]{E.G.~Vishnyakova}

\address{%
Max-Planck-Institut f\"{u}r Mathematik\\
P.O.Box: 7280\\
53072 Bonn\\
Germany
}

\email{vishnyakovae@googlemail.com, Liza@mpim-bonn.mpg.de}

\thanks{This work was partially supported by MPI Bonn, SFB TR $|$$12$, DFG 1388 and by the Russian Foundation
for Basic Research (grant no. 11-01-00465a).}

\subjclass{Primary 32C11; Secondary 58A50}

\keywords{Locally free sheaf, supermanifold, spectral sequence}

\date{January 1, 2004}

\dedicatory{To our team coach Yu.A. Kirillov on his 70th birthday}

\begin{abstract}
This paper is based on the paper \cite{Oni_Vi},
where two classification theorems for locally free sheaves on supermanifolds were proved and a spectral
sequence for a locally free sheaf
of modules $\mathcal E$ was obtained.
We consider another filtration of the locally free sheaf $\mathcal E$, the corresponding classification theorem and  the
spectral sequence, which is more convenient in some cases. The methods, which we are using here, are similar to \cite{Oni_SPS, Oni_Vi}.

 The first spectral sequence of this kind was constructed by A.L.~Oni\-shchik in \cite{Oni_SPS} for the tangent sheaf of a supermanifold.
 However, the spectral sequence considered in this paper is not a generalization of Onishchik's spectral sequence from \cite{Oni_SPS}.
\end{abstract}

\maketitle

\section{ Main definitions and classification theorems}

\subsection{ Main definitions} Let $(M,\mathcal{O})$ be a supermanifold of
dimension $n|m$, i.e. a $\mathbb{Z}_2$-graded ringed space that is locally
isomorphic to a superdomain in $\mathbb{C}^{n|m}$. The underlying complex
manifold $(M,\mathcal{F})$ is called the {\it reduction} of $(M,\mathcal{O})$.
The simplest class of supermanifolds constitute the so-called {\it split supermanifolds}.
We recall that a supermanifold $(M,\mathcal{O})$ is called split if
$\mathcal{O}\simeq \bigwedge_{\mathcal{F}}\mathcal{G}$, where $\mathcal{G}$ is a locally
free sheaf of $\mathcal{F}$-modules on $M$. With any supermanifold $(M,\mathcal{O})$ one
can associate a split supermanifold $(M,\tilde{\mathcal{O}})$ of the same dimension which is
called the {\it retract} of $(M,\mathcal{O})$. To construct it, let us consider the
$\mathbb{Z}_2$-graded sheaf of ideals $\mathcal{J} =
\mathcal{J}_{\bar 0}\oplus\mathcal{J}_{\bar 1}\subset\mathcal{O}$ generated by odd elements
of $\mathcal{O}$. The structure sheaf of the retract is defined by
$$
\tilde{\mathcal{O}}= \bigoplus_{p\geq 0} \tilde{\mathcal{O}}_p,\,\,\,\text{where} \,\,\,
\tilde{\mathcal{O}}_p= \mathcal{J}^p/ \mathcal{J}^{p+1},\,\,\,\mathcal{J}^0:= \mathcal{O}.
$$
Here $\tilde{\mathcal{O}}_1$ is a locally free sheaf of $\mathcal{F}$-modules on $M$ and
$\tilde{\mathcal{O}}_p = \bigwedge^p_{\mathcal{F}}\tilde{\mathcal{O}}_1$.
By definition, the following sequences
\begin{equation}\label{exact sequence structure sheaf}
\begin{array}{c}
0\to \mathcal{J}\cap \mathcal{O}_{\bar 0} \to \mathcal{O}_{\bar 0}\stackrel{\pi}{\to} \tilde{\mathcal{O}}_{0} \to 0,\\
0\to \mathcal{J}^2\cap \mathcal{O}_{\bar 1} \to \mathcal{O}_{\bar 1} \stackrel{\tau}{\to} \tilde{\mathcal{O}}_{1} \to 0.
\end{array}
\end{equation}
are exact. Moreover, they are locally split. The supermanifold $(M,\mathcal{O})$ is split iff both sequences are globally split.

Denote by $\mathcal{S}_{\bar 0}$ and $\mathcal{S}_{\bar 1}$ the even and the odd parts of a $\mathbb{Z}_2$-graded sheaf of
$\mathcal{O}$-modules $\mathcal{S}$ on $M$, respectively; by
$\Pi(\mathcal{S})$ we denote the same sheaf of $\mathcal{O}$-modules
$\mathcal{S}$ equipped with the following $\mathbb{Z}_2$-grading:
$\Pi(\mathcal{S})_{\bar 0} = \mathcal{S}_{\bar 1}$, $
\Pi(\mathcal{S})_{\bar 1} = \mathcal{S}_{\bar 0}$.
A $\mathbb{Z}_2$-graded sheaf of $\mathcal{O}$-modules on $M$ is
called {\it free\/} ({\it locally free}) {\it of rank\/} $p|q,\, p,q\ge 0$ if it
is isomorphic (respectively, locally isomorphic) to the $\mathbb{Z}_2$-graded
sheaf of $\mathcal{O}$-modules $\mathcal{O}^p\oplus\Pi(\mathcal{O})^q$. For
example, the tangent sheaf $\mathcal{T}$ of a supermanifold $(M,\mathcal{O})$
is a locally free sheaf of $\mathcal{O}$-modules.

Let now $\mathcal{E}=\mathcal{E}_{\bar 0}\oplus \mathcal{E}_{\bar 1}$ be a
locally free sheaf of $\mathcal{O}$-modules of rang $p|q$ on an arbitrary
supermanifold $(M,\mathcal{O})$. We are going to construct a locally free sheaf
of the same rank on $(M,\tilde{\mathcal{O}})$.
First, we note that $\mathcal{E}_{\red}:=\mathcal{E}/\mathcal{J}\mathcal{E}$ is
a locally free sheaf of $\mathcal{F}$-modules on $M$. Moreover,
$\mathcal{E}_{\red}$ admits the $\mathbb{Z}_2$-grading
$
\mathcal{E}_{\red} = (\mathcal{E}_{\red})_{\bar 0}\oplus(\mathcal{E}_{\red})_{\bar 1},
$
by two locally free sheaves of $\mathcal{F}$-modules
$$
(\mathcal{E}_{\red})_{\bar 0}:= \mathcal{E}_{\bar
0}/\mathcal{J}\mathcal{E} \cap \mathcal{E}_{\bar 0}\,\, \text{ and}
\,\, (\mathcal{E}_{\red})_{\bar 1}:= \mathcal{E}_{\bar
1}/\mathcal{J}\mathcal{E} \cap \mathcal{E}_{\bar 1}
$$
of ranks $p$ and $q$, respectively.
Further, the sheaf $\mathcal{E}$ possesses the filtration
\begin{equation}\label{filtr_rassloenie}
\mathcal{E}=\mathcal{E}_{(0)}\supset \mathcal{E}_{(1)} \supset
\mathcal{E}_{(2)} \supset \ldots , \,\,\text{where}\,\,\mathcal{E}_{(p)} = \mathcal{J}^{p} \mathcal{E}_{\bar 0} + \mathcal{J}^{p-1} \mathcal{E}_{\bar 1},\,\, p\geq 1.
\end{equation}
Using this filtration, we can construct the following locally free sheaf of
$\tilde{\mathcal{O}}$-modules on $M$:
$$
\begin{array}{c}
\tilde{\mathcal{E}}= \bigoplus_p \tilde{\mathcal{E}}_p,
\,\,\,\text{where} \,\,\,
 \tilde{\mathcal{E}}_p= \mathcal{E}_{(p)} /
\mathcal{E}_{(p+1)}.
\end{array}
$$
The sheaf $\tilde{\mathcal{E}}$ is also a locally free sheaf
of $\mathcal F$-modules. In other words, $\tilde{\mathcal{E}}$ is a sheaf of sections of a certain vector bundle.
The following exact sequence gives a description of $\tilde{\mathcal{E}}$.
$$
0\to \tilde{\mathcal{O}}_p\otimes (\mathcal E_{\red})_{\bar 0} \to \tilde{\mathcal{E}}_p \to
\tilde{\mathcal{O}}_{p-1}\otimes (\mathcal E_{\red})_{\bar 1} \to 0.
$$
We also have the following two exact sequences, which
are locally split:
\begin{equation}\label{sequence_stuct bundle}
\begin{array}{c}
0\to\mathcal{E}_{(1)\bar 0} \to
\mathcal{E}_{(0)\bar 0}\stackrel{\alpha}{\to} \tilde{\mathcal{E}}_0
 \to 0;\\
0\to \mathcal{E}_{(2)\bar 1} \to
\mathcal{E}_{(1)\bar 1} \stackrel{\beta}{\to} \tilde{\mathcal{E}}_1 \to 0.
\end{array}
\end{equation}
The sheaf $\tilde{\mathcal{E}}$ is $\mathbb{Z}$-graded by definition. Unlike the $\mathbb{Z}_{2}$-grading considered in \cite{Oni_Vi}, the natural
$\mathbb{Z}_{2}$-grading is compatible with this $\mathbb{Z}$-grading.
$$
(\tilde{\mathcal{E}})_{\bar 0}: = \bigoplus_{p=2k}  \tilde{\mathcal{E}}_p,\quad
 (\tilde{\mathcal{E}})_{\bar 1}: = \bigoplus_{p=2k_1}  \tilde{\mathcal{E}}_p.
$$

\subsection{Classification theorem for locally free sheaves $\mathcal{E}$
on supermanifolds with given $\tilde{\mathcal{E}}$}

Our objective now is to classify locally free sheaves $\mathcal{E}$ of
$\mathcal{O}$-modules on supermanifolds $(M,\mathcal{O})$ which
have the fixed retract $(M,\tilde{\mathcal{O}})$ and such that the corresponding locally free sheaf $\tilde{\mathcal{E}}$ is fixed.

Let $(M,\mathcal{O})$ and $(M,\mathcal{O}')$ be two supermanifolds,
$\mathcal{E}$, $\mathcal{E}'$ be locally free sheaves of
$\mathcal{O}$-modules and $\mathcal{O}'$-modules on $M$, respectively.
Suppose that $\Psi: \mathcal{O} \to \mathcal{O}'$ is a superalgebra sheaf morphism.
 A vector space sheaf morphism $\Phi_{\Psi}: \mathcal{E}\to
\mathcal{E}'$ is called a {\it quasi-morphism} if
$$
\Phi_{\Psi}(fv)= \Psi(f)\Phi_{\Psi}(v), \,\,f\in \mathcal{O}, \,\,v\in \mathcal{E}.
$$
As usual, we assume that $\Phi_{\Psi}(\mathcal{E}_{\bar i})
\subset \mathcal{E}'_{\bar i}$, $\bar i\in \{\bar 0, \bar 1\}$.
An invertible quasi-morphism is called a {\it quasi-isomorphism}. A quasi-isomorphism $\Phi_{\Psi}: \mathcal{E}\to
\mathcal{E}$ is also called a {\it quasi-automorphism} of
$\mathcal{E}$.
Denote by $\mathcal{A}ut \mathcal{E}$ the sheaf of quasi-automorphisms of $\mathcal{E}$.
It has a double filtration by the subsheaves
$$
\begin{array}{c}
\mathcal{A}ut_{(p)(q)} \mathcal{E}:= \{\Phi_{\Psi}\in \mathcal{A}ut
\mathcal{E}\mid \Phi_{\Psi}(v)\equiv v \mod \mathcal{E}_{(p)},\,
\Psi(f)=f
\mod \mathcal{J}^q\\
\text{for}\,\, v\in \mathcal{E}, f\in \mathcal{O} \},\,\,p,q\geq 0.
\end{array}
$$
We also define the following subsheaf of $\mathcal{A}ut \tilde{\mathcal{E}}$:
\begin{equation}
\begin{split}
\widetilde{\mathcal{A}ut} \tilde{\mathcal{E}}:= \{ \Phi_{\Psi} \mid
\Phi_{\Psi}\in \mathcal{A}ut (\tilde{\mathcal{E}}), \,\,
\Phi_{\Psi}\text{ preserves the $ \mathbb{Z}$-grading of $\tilde{\mathcal{E}}$}\}.
\end{split}
\end{equation}
If $\Phi_{\Psi}\in \widetilde{\mathcal{A}ut} \tilde{\mathcal{E}}$, then $\Psi: \tilde{\mathcal{O}}\to \tilde{\mathcal{O}}$ also preserves
the $\mathbb{Z}$-grading.
The $0$-th cohomology group $H^0(M,\widetilde{\mathcal{A}ut} \tilde{\mathcal{E}})$
acts on the sheaf $\mathcal{A}ut \tilde{\mathcal{E}}$ by the
automorphisms $\delta\mapsto a\circ \delta \circ a^{-1}$, where
$a\in H^0(M,\widetilde{\mathcal{A}ut} \tilde{\mathcal{E}})$ and
$\delta\in \mathcal{A}ut \tilde{\mathcal{E}}$. It is easy to see that
this action leaves invariant the subsheaves $\mathcal{A}ut_{(p)(q)}
\tilde{\mathcal{E}}$ and hence induces an action of
$H^0(M,\widetilde{\mathcal{A}ut} \tilde{\mathcal{E}})$ on the
cohomology set $H^1(M,\mathcal{A}ut_{(p)(q)} \tilde{\mathcal{E}})$. The unit element
$\epsilon\in H^1(M,\mathcal{A}ut_{(p)(q)}
\mathcal{E}')$ is a fixed point with respect to the action of
$H^0(M,\mathcal{A}ut \mathcal{E}')$.

Let $\mathcal{E}$ be a locally free sheaf of $\mathcal{O}$-modules on $M$.
Denote
$$
[\mathcal{E}] = \{ \mathcal{E}' \mid \mathcal{E}'\,\, \text{is quasi-isomorphic to}\,\, \mathcal{E}  \}.
$$
The total space of the bundle corresponding to a locally free sheaf $\mathcal{E}$ will be denoted $\mathbb{E}$. It is a supermanifold. The locally free sheaf $\tilde{\mathcal{E}}$ corresponding to $\mathcal{E}$ has the following property: The retract $\tilde{\mathbb{E}}$ of $\mathbb{E}$ is the total space of the bundle corresponding to $\tilde{\mathcal{E}}$.

\begin{thm}\label{teor_main} Let $(M,\mathcal{O}')$ be a split supermanifold and
$\mathcal{E}'$ be a locally free sheaf of
$\mathcal{O}'$-modules on $M$ such that $\mathcal{E}' \simeq \tilde{\mathcal{E}}'$.
Then
$$
\begin{array}{c}
\{[\mathcal{E}] \mid \tilde{\mathcal{O}}= \mathcal{O}', \,\, \tilde{\mathcal{E}} =
\mathcal{E}'\}\stackrel{1:1}{\longleftrightarrow}
 H^1(M,\mathcal{A}ut_{(2)(2)} \mathcal{E}')/H^0(M,\widetilde{\mathcal{A}ut} \mathcal{E}').
\end{array}
$$
The orbit of the unit element $\epsilon$, which is $\epsilon$ itself, corresponds to $\mathcal{E}'.$

\end{thm}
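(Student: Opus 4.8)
The plan is to identify both sides of the correspondence with gluing data, realizing the classification through non-abelian \v{C}ech $1$-cocycles in the style of \cite{Oni_Vi, Oni_SPS}. The starting point is that the exact sequences \eqref{exact sequence structure sheaf} and \eqref{sequence_stuct bundle} are \emph{locally} split. Hence over a sufficiently fine open cover $\{U_i\}$ of $M$ the filtration \eqref{filtr_rassloenie} splits, so each restriction $\mathcal{E}|_{U_i}$ is quasi-isomorphic to its associated graded sheaf $\tilde{\mathcal{E}}|_{U_i}$, and likewise $\mathcal{O}|_{U_i}\simeq\tilde{\mathcal{O}}|_{U_i}$. Using the fixed identifications $\tilde{\mathcal{O}}=\mathcal{O}'$ and $\tilde{\mathcal{E}}=\mathcal{E}'$, I would produce, for each $i$, a quasi-isomorphism $\Phi_i\colon \mathcal{E}|_{U_i}\to\mathcal{E}'|_{U_i}$ covering an algebra isomorphism $\Psi_i\colon\mathcal{O}|_{U_i}\to\mathcal{O}'|_{U_i}$, chosen so that $\Phi_i$ induces the prescribed identification on the graded level.

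On overlaps the transitions $g_{ij}=\Phi_i\circ\Phi_j^{-1}$ are quasi-automorphisms of $\mathcal{E}'|_{U_i\cap U_j}$ satisfying the cocycle relation. The key point is that, since each $\Phi_i$ induces the identity on the graded sheaves, every $g_{ij}$ acts trivially on $\tilde{\mathcal{O}}_0,\tilde{\mathcal{O}}_1$ and on $\tilde{\mathcal{E}}_0,\tilde{\mathcal{E}}_1$. Writing $\Psi_{ij}$ for the algebra automorphism underlying $g_{ij}$, the definitions of $\mathcal{E}_{(p)}$ and $\mathcal{J}^q$ turn this into $\Psi_{ij}(f)\equiv f \mod \mathcal{J}^2$ and $g_{ij}(v)\equiv v\mod \mathcal{E}_{(2)}$, i.e. $g_{ij}\in\mathcal{A}ut_{(2)(2)}\mathcal{E}'$. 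This is where the indices $(2)(2)$ come from. The resulting class $[\,\{g_{ij}\}\,]\in H^1(M,\mathcal{A}ut_{(2)(2)}\mathcal{E}')$ is the image of $[\mathcal{E}]$.

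Conversely, given a cocycle with values in $\mathcal{A}ut_{(2)(2)}\mathcal{E}'$ I would reglue the local models $\mathcal{E}'|_{U_i}$ along the $g_{ij}$ to obtain a locally free sheaf $\mathcal{E}$ over a supermanifold $(M,\mathcal{O})$; the cocycle condition guarantees consistency, and membership in $\mathcal{A}ut_{(2)(2)}$ guarantees that the regluing does not change the associated graded data, so that $\tilde{\mathcal{O}}=\mathcal{O}'$ and $\tilde{\mathcal{E}}=\mathcal{E}'$. I would then check the usual well-definedness statements: refining the cover or replacing the $\Phi_i$ by other local trivializations changes $\{g_{ij}\}$ only by a coboundary in $\mathcal{A}ut_{(2)(2)}\mathcal{E}'$, so the cohomology class is independent of these choices. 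Finally, a global quasi-isomorphism $\mathcal{E}\to\mathcal{E}''$ induces a grading-preserving quasi-automorphism of $\tilde{\mathcal{E}}=\mathcal{E}'$, that is, an element of $H^0(M,\widetilde{\mathcal{A}ut}\mathcal{E}')$; conjugating the cocycle by it is precisely the stated action, so quasi-isomorphic sheaves land in a single orbit, and distinct orbits give non-quasi-isomorphic sheaves.

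The main obstacle I expect is not any single step but the bookkeeping needed to show the two constructions are mutually inverse while matching the $H^0$-action exactly: one must verify that the ambiguity in the local choices $\Phi_i$ (the coboundaries) together with the global regrading ambiguity (the $H^0$-action) accounts for \emph{all} and \emph{only} the identifications among the $[\mathcal{E}]$, with no spurious collapse or extra classes. Tracking how the two filtrations --- the $\mathcal{J}^q$-filtration on $\mathcal{O}'$ and the $\mathcal{E}_{(p)}$-filtration on $\mathcal{E}'$ --- interact under composition of quasi-automorphisms is the technical heart of this verification. The claim that $\epsilon$ corresponds to $\mathcal{E}'$ and is fixed by the action is then immediate, since the trivial cocycle reglues to $\mathcal{E}'$ itself.
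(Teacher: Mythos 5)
Your proposal takes essentially the same route as the paper's proof: local splittings of \eqref{exact sequence structure sheaf} and \eqref{sequence_stuct bundle} over a suitable cover give local trivializations $\delta_i\colon\mathcal{E}|_{U_i}\to\mathcal{E}'|_{U_i}$ inducing the identity on the graded level, the transition cocycle $\delta_i\circ\delta_j^{-1}$ is then verified to lie in $Z^1(\mathfrak U,\mathcal{A}ut_{(2)(2)}\mathcal{E}')$, and the inverse construction, independence of choices, and matching of the $H^0(M,\widetilde{\mathcal{A}ut}\,\mathcal{E}')$-action are exactly the bookkeeping the paper itself delegates to Theorem~2 of \cite{Oni_Vi}. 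This is correct and matches the paper's argument.
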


\begin{proof}  Let $\mathcal{E}$ be a locally free sheaf of
$\mathcal{O}$-modules on $(M,\mathcal{O})$ and $\mathcal{U}=\{ U_i\}$ be an
open covering of $M$ such that (\ref{exact sequence structure sheaf}) and
(\ref{sequence_stuct bundle}) are split (hence exact) over $U_i$ and $\mathcal{E}|_{U_i}$ are
free. In this case, $\tilde{\mathcal{E}}|_{U_i}$ are free sheaves of
$\tilde{\mathcal{O}}$-modules. We fix homogeneous bases (even and odd, respectively) $(\hat{e}^i_j)$ and $(\hat{f}^i_j)$ of the free sheaves of $\tilde{\mathcal{O}}$-modules
$\tilde{\mathcal{E}}|_{U_i}$, $U_i\in \mathcal{U}$. Without loss of generality, we may assume that $\hat{e}^i_j\in \tilde{\mathcal{E}}_0$ and $\hat{f}^i_j\in \tilde{\mathcal{E}}_1$.
 We are going to define an isomorphism
$\delta_i:\mathcal{E}|_{U_i}\to \tilde{\mathcal{E}}|_{U_i}$.

 Let
$e_j^i\in \mathcal{E}_{(0)\bar 0}$ be such that
$\alpha(e^i_j)=\hat{e}^i_j$ and $f_j^i\in \mathcal{E}_{(0)\bar 1}$
be such that $\beta(f^i_j)=\hat{f}^i_j$, see (\ref{sequence_stuct bundle}). Then $(e_j^i,f_j^i)$ is a
local basis of $\mathcal{E}|_{U_i}$. A splitting of
(\ref{exact sequence structure sheaf}) determines a local isomorphism
$\sigma_i:\mathcal{O}|_{U_i} \to \tilde{\mathcal{O}}|_{U_i}$, see \cite{Green}. We put
$$
\delta_i(\sum h_je^i_j+ \sum g_jf^i_j)=  \sum
\sigma_i(h_j)\hat{e}^i_j+ \sum\sigma_i(g_j)\hat{f}^i_j,
\,\,\,h_j,g_j\in \mathcal{O}.
$$
Obviously, $\delta_i$ is an isomorphism. We put $\gamma_{ij}:=
\sigma_i\circ \sigma_j^{-1}$ and $(g_{ij})_{\gamma_{ij}}:=
\delta_i\circ \delta_j^{-1}$. Moreover, $(\gamma_{ij})\in
Z^1(\mathcal{U}, \mathcal{A}ut_{(2)} \tilde{\mathcal{O}})$, see \cite{Green} for more details.  We want to show that
$$
((g_{ij})_{\gamma_{ij}})\in Z^1(\mathcal{U}, \mathcal{A}ut_{(2)(2)}
\tilde{\mathcal{E}}).
$$
Let us take $v\in \tilde{\mathcal{E}}|_{U_j}$, $v = \sum h_k\hat{e}^i_k+ \sum g_k\hat{f}^i_k,
\,\,\,h_j,g_j\in \tilde{\mathcal{O}}$. Then by definition we have
$$
\delta_j^{-1}(v) = \sum
\sigma_j^{-1}(h_k)e^j_k+ \sum\sigma_j^{-1}(g_k)f^j_k.
$$
The transition functions of $\tilde{\mathcal{E}}$ may be expressed in $U_i\cap U_j$ as follows:
$$
e^j_k = \sum a^k_s e^i_s+ \sum b^k_s f^i_s,\,\,
f^j_k = \sum c^k_s e^i_s+ \sum d^k_s f^i_s, \,\,a^k_s,d^k_s\in \mathcal{O}_{\bar 0}, \,\,
b^k_s,c^k_s\in \mathcal{O}_{\bar 1}.
$$
Further,
$$
\alpha (e^j_k) = \hat{e}^j_k = \sum \pi(a^k_s) \hat{e}^i_s, \quad
\beta(f^j_k) = \hat{f}^j_k =
\sum \tau(c^k_s) \hat{e}^i_s+ \sum \pi(d^k_s) \hat{f}^i_s.
$$
We have
$$
\begin{array}{rl}
\delta_j\circ \delta_j^{-1}(v) =&\sum _k \gamma_{ij}(h_k) (\sum_s \sigma_i(a^k_s) \hat{e}^i_s+ \sum_r \sigma_i(b^k_s) \hat{f}^i_s)+ \\
& \sum _k \gamma_{ij}(g_k) (\sum_s \sigma_i(c^k_s) \hat{e}^i_s+ \sum_s \sigma_i(d^k_s) \hat{f}^i_s)=\\
&\sum _k h_k (\sum_s \pi(a^k_s) \hat{e}^i_s)+  \sum _k g_k (\sum_s \tau(c^k_s) \hat{e}^i_s+\\
& \sum_s \pi(d^k_s) \hat{f}^i_s)\mod\tilde{\mathcal{E}}_{(2)} = v \mod\tilde{\mathcal{E}}_{(2)}.
\end{array}
$$
The rest of the proof is a direct repetition of the proof of Theorem~2 from \cite{Oni_Vi}.
\end{proof}

\section{The spectral sequence}

\subsection{Quasi-derivations}

Quasi-derivations were defined in \cite{Oni_Vi}. Let us briefly recall that construction.
Consider a locally free sheaf $\mathcal{E}$ on a supermanifold $(M,\mathcal{O})$. An even vector
space sheaf morphism $A_{\Gamma}:\mathcal{E} \to \mathcal{E}$ is
called a {\it quasi-derivation} if $A_{\Gamma}(fv)=\Gamma(f)v+f
A_{\Gamma}(v)$, where $f\in \mathcal{O}$, $v\in \mathcal{E}$ and $\Gamma$ is a certain even super vector field. Denote by $Der\mathcal{E}$ the sheaf of quasi-derivations. It is a sheaf of Lie algebras with respect to the commutator
$[A_{\Gamma}, B_{\Upsilon}]:= A_{\Gamma}\circ B_{\Upsilon} -
B_{\Upsilon} \circ A_{\Gamma}$. The sheaf $Der\mathcal{E}$
possesses a double filtration
$$
\begin{array}{cccc}
Der_{(0)(0)}\mathcal{E} &\supset & Der_{(2)(0)}\mathcal{E}& \supset\cdots \\
\cup &  & \cup& \\
Der_{(0)(2)}\mathcal{E} &\supset & Der_{(2)(2)}\mathcal{E}& \supset\cdots\\
\vdots &  & \vdots& \\
\end{array},
$$
where
$$
\begin{array}{c}
Der_{(p)(q)}\mathcal{E}:= \{ A_{\Gamma}\in Der\mathcal{E} \mid
A_{\Gamma}(\mathcal{E}_{(r)})\subset \mathcal{E}_{(r+p)},\,\,
\Gamma(\mathcal{J}^s)\subset \mathcal{J}^{s+q},\,\, r,s\in \mathbb{Z}\},
\end{array}
$$
where $p,q\geq 0$.
The map defined by the usual exponential series
$$
\exp: Der_{(p)(q)}\mathcal{E} \to \mathcal{A}ut_{(p)(q)} \mathcal{E}, \,\, p,q\geq 2,
$$
is an isomorphism of sheaves of sets, because operators from $Der_{(p)(q)}\mathcal{E}$, $p,q\geq 2$, are nilpotent. The inverse map is given by the logarithmic series. Define the vector space subsheaf $Der_{k, k}\tilde{\mathcal{E}}$ of $Der_{(k)(k)}\tilde{\mathcal{E}}$ for $k\geq 0$ by
$$
\begin{array}{rl}
Der_{k, k}\tilde{\mathcal{E}} := & \{ A_{\Gamma}\in Der_{(k)(k)}\tilde{\mathcal{E}}
\mid A_{\Gamma}(\tilde{\mathcal{E}}_{r})\subset \tilde{\mathcal{E}}_{r+k},\,\,\,
\Gamma(\tilde{\mathcal{O}}_s) \subset \tilde{\mathcal{O}}_{s+k}, \,\, r,s\in \mathbb{Z}
\}.
\end{array}
$$
For an even $k\geq 2$, define a map
$$
\mu_k: \mathcal{A}ut_{(k)(2)} \tilde{\mathcal{E}} \to Der_{k,
k}\tilde{\mathcal{E}},\quad
\mu_k(a_{\gamma}) = \bigoplus_q \pr_{q+k} \circ A_{\Gamma} \circ
\pr_{q},
$$
where $a_{\gamma} = \exp (A_{\Gamma})$ and $\pr_{k}: \tilde{\mathcal{E}}
\to \tilde{\mathcal{E}}_k$ is the natural projection. The kernel of this
map is $\mathcal{A}ut_{(k+2)(2)} \tilde{\mathcal{E}}$. Moreover, the
sequence
$$
0\to \mathcal{A}ut_{(k+2)(2)} \tilde{\mathcal{E}} \longrightarrow
\mathcal{A}ut_{(k)(2)} \tilde{\mathcal{E}}
\stackrel{\mu_k}{\longrightarrow} Der_{k, k}\tilde{\mathcal{E}}\to 0,
$$
where $k\geq 2$ is even, is exact.
Denoting by $H_{(k)}(\tilde{\mathcal{E}})$ the image of the natural
mapping $H^1(M,\mathcal{A}ut_{(k)(2)}\tilde{\mathcal{E}})\to
H^1(M,\mathcal{A}ut_{(2)(2)}\tilde{\mathcal{E}})$, we get the filtration
$$
H^1(M,\mathcal{A}ut_{(2)}\tilde{\mathcal{E}}) = H_{(2)}(\tilde{\mathcal{E}})
\supset H_{(4)}(\tilde{\mathcal{E}})\supset \ldots .
$$
Take $a_{\gamma}\in H_{(2)}(\tilde{\mathcal{E}})$. We define the {\it order of}
$a_{\gamma}$ to be the maximal number $k$ such that
$a_{\gamma}\in H_{(k)}(\tilde{\mathcal{E}})$. The {\it order of a
locally free sheaf} $\mathcal{E}$ of $\mathcal{O}$-modules on a
supermanifold $(M,\mathcal{O}_M)$ is by definition the order of the
corresponding cohomology class.

\subsection{ The spectral sequence.}

A spectral sequence connecting the cohomology with values in the tangent sheaf $\mathcal{T}$
of a supermanifold $(M,\mathcal{O})$ with the cohomology with values in the tangent sheaf $\mathcal{T}_{\gr}$ of the retract $(M,\tilde{\mathcal{O}})$
was constructed in \cite{Oni_SPS}. Here we use similar ideas to construct a new spectral sequence connecting the cohomology with values in a locally free sheaf $\mathcal{E}$ on a supermanifold $(M,\mathcal{O})$ with the cohomology with values in the locally free sheaf $\tilde{\mathcal{E}}$ on $(M,\tilde{\mathcal{O}})$. Note that our spectral sequence is not a generalization of the spectral sequence obtained in  \cite{Oni_SPS} because $\mathcal{T}_{\gr}$ is not in general isomorphic to $\tilde{\mathcal{T}}$.

Let $\mathcal{E}$ be a locally free sheaf on a supermanifold
$(M,\mathcal{O})$ of dimension $n|m$. We
fix an open Stein covering $\mathfrak U = (U_i)_{i\in I}$ of $M$ and
consider the corresponding \v{C}ech cochain complex $C^*(\mathfrak
U,\mathcal{E}) = \bigoplus_{p\ge 0} C^p(\mathfrak U,\mathcal{E})$.
The $\mathbb Z_2$-grading of $\mathcal{E}$ gives rise to the $\mathbb
Z_2$-gradings in $C^*(\mathfrak U,\mathcal{E})$ and $H^*(M,\mathcal{E})$
given by
\begin{equation}\label{grading of H}
\aligned C_{\bar 0}(\mathfrak U,\mathcal{E}) &= \bigoplus_{q\ge 0}
C^{2q}(\mathfrak U,\mathcal{E}_{\bar 0})\oplus\bigoplus_{q\ge 0}
C^{2q+1}(\mathfrak U,\mathcal{E}_{\bar 1}),\\
C_{\bar 1}(\mathfrak U,\mathcal{E}) &= \bigoplus_{q\ge 0} C^{2q}(\mathfrak
U,\mathcal{E}_{\bar 1})\oplus\bigoplus_{q\ge 0}
C^{2q+1}(\mathfrak U,\mathcal{E}_{\bar 0}).\\
H_{\bar 0}(M,\mathcal{E}) &= \bigoplus_{q\ge 0}
H^{2q}(M,\mathcal{E}_{\bar 0})\oplus
\bigoplus_{q\ge 0} H^{2q+1}(M,\mathcal{E}_{\bar 1}),\\
H_{\bar 1}(M,\mathcal{E}) &= \bigoplus_{q\ge 0}
H^{2q}(M,\mathcal{E}_{\bar 1})\oplus \bigoplus_{q\ge 0}
H^{2q+1}(M,\mathcal{E}_{\bar 0}).
\endaligned
\end{equation}
The filtration (\ref{filtr_rassloenie}) for $\mathcal{E}$ gives rise
to the filtration
\begin{equation}\label{filtr ckomplex}
C^*(\mathfrak U,\mathcal{E}) = C_{(0)}\supset\ldots\supset C_{(p)}
\supset\ldots\supset C_{(m+2)} = 0
\end{equation}
of this complex by the subcomplexes
$$
C_{(p)} = C^*(\mathfrak U,\mathcal{E}_{(p)}).
$$
Denoting by $H(M,\mathcal{E})_{(p)}$ the image of the natural
mapping $H^*(M,\mathcal{E}_{(p)})\to H^*(M,\mathcal{E})$, we get the
filtration
\begin{equation}\label{filtr H}
H^*(M,\mathcal{E}) = H(M,\mathcal{E})_{(0)}\supset\ldots \supset
H(M,\mathcal{E})_{(p)}\supset \ldots.
\end{equation}
Denote by $\gr H^*(M,\mathcal{E})$ the bigraded group associated
with the filtration (\ref{filtr H}); its bigrading is given by
$$
\gr H^*(M,\mathcal{E}) = \bigoplus_{ p,q\ge 0}\gr_p
H^q(M,\mathcal{E}).
$$
By the (more general) Leray procedure, we get a spectral sequence of bigraded
groups $E_r$ converging to $E_{\infty}\simeq \gr
H^*(M,\mathcal{E})$. For convenience of the reader, we recall the main definitions here.

For any $p,r\ge 0$, define the vector spaces
$$
C^p_r = \{c\in C_{(p)}\,|\,dc\in C_{(p+r)}\}.
$$
Then, for a fixed $p$, we have
$$
C_{(p)} = C^p_0\supset\ldots\supset C^p_r\supset
C^p_{r+1}\supset\ldots.
$$
The $r$-th term of the spectral sequence is defined by
$$
E_r = \bigoplus_{p=0}^m E^p_r,\;r\ge 0, \,\, \text{where}\,\, E^p_r = C^p_r/C^{p+1}_{r-1} + dC^{p-r+1}_{r-1}.
$$
Since $d(C^p_r)\subset C^{p+r}_r$, $d$ induces a derivation $d_r$ of
$E_r$ of degree $r$ such that $d_r^2 = 0$. Then $E_{r+1}$ is
naturally isomorphic to the homology algebra $H(E_r,d_r)$.
The $\mathbb Z_2$-grading (\ref{grading of H}) in $C^*(\mathfrak
U,\mathcal{E})$ gives rise to certain $\mathbb Z_2$-gradings in $C^p_r$
and $E^p_r$, turning $E_r$ into a superspace. Clearly, the
coboundary operator $d$ on $C^*(\mathfrak U,\mathcal{E})$ is odd. It
follows that the coboundary $d_r$ is odd for any $r\ge 0$.

The superspaces $E_r$ are also endowed with a second $\mathbb
Z$-grading. Namely, for any $q\in \mathbb Z$, set
$$
\aligned
C^{p,q}_r &= C^p_r\cap C^{p+q}(\mathfrak U,\mathcal{E}),\quad
E^{p,q}_r &= C^{p,q}_r/C^{p+1,q-1}_{r-1} + dC^{p-r+1,q+r-2}_{r-1}.
\endaligned
$$
Then
\begin{equation}\label{d_r(E^(p,q)_r)subset}
E_r = \bigoplus_{p,q} E^{p,q}_r\, \, \text{and} \,\, d_r(E^{p,q}_r)\subset E^{p+r,q-r+1}_r\,\, \text{for any} \,\,r,\,p,\,q.
\end{equation}
Further, for a fixed $q$, we have $d(C^{p,q}_r) = 0$ for
all $p\ge 0$ and all $r\ge m+2$. This implies that the natural homomorphism
$E^{p,q}_r\to E^{p,q}_{r+1}$ is an isomorphism for
all $p$ and $r\ge r_0 = m+2$. Setting $E^{p,q}_{\infty} =
E^{p,q}_{r_0}$, we get the bigraded superspace
$$
E_{\infty} = \bigoplus_{p,q} E^{p,q}_{\infty}.
$$

\begin{lem}\label{E_0 and E_1=E_2} The first two terms of the spectral
sequence $(E_r)$ can be identified with the following bigraded
spaces:
$$
\aligned
E_0 = C^*(\mathfrak U,\tilde{\mathcal{E}}),\,\,
E_1 = E_2 =  H^*(M,\tilde{\mathcal{E}}).
\endaligned
$$
More precisely,
$$
\aligned
E_0^{p,q} = C^{p+q}(\mathfrak U,\tilde{\mathcal{E}}_p),\,\,
E_1^{p,q} = E_2^{p,q} =  H^{p+q}(M,\tilde{\mathcal{E}}_p).
\endaligned
$$
We have $d_{2k+1} = 0$ and, hence, $E_{2k+1} = E_{2k+2}$ for all
$k\ge 0$.
\end{lem}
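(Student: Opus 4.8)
The plan is to read $E_0$ and $E_1$ directly off the defining formulas for $E^{p,q}_r$, and then to force the odd differentials to vanish by a $\mathbb{Z}_2$-parity count resting on the compatibility of the $\mathbb{Z}$- and $\mathbb{Z}_2$-gradings of $\tilde{\mathcal{E}}$. First I would compute $E_0$ by setting $r=0$. In $C^{p,q}_0$ the condition $dc\in C_{(p)}$ is automatic, so $C^{p,q}_0 = C^{p+q}(\mathfrak U,\mathcal{E}_{(p)})$; likewise $C^{p+1,q-1}_{-1} = C^{p+q}(\mathfrak U,\mathcal{E}_{(p+1)})$, and since the \v{C}ech coboundary preserves the filtration $(C_{(p)})$ one also has $dC^{p+1,q-2}_{-1}\subset C^{p+q}(\mathfrak U,\mathcal{E}_{(p+1)})$. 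Hence $E^{p,q}_0 = C^{p+q}(\mathfrak U,\mathcal{E}_{(p)})/C^{p+q}(\mathfrak U,\mathcal{E}_{(p+1)})$. Because $\mathfrak U$ is Stein and $\mathcal{E}_{(p+1)}$ is locally free (hence coherent), Cartan's Theorem~B kills $H^1$ on every finite intersection of members of $\mathfrak U$, so passing to the quotient commutes with forming cochains and $E^{p,q}_0 = C^{p+q}(\mathfrak U,\mathcal{E}_{(p)}/\mathcal{E}_{(p+1)}) = C^{p+q}(\mathfrak U,\tilde{\mathcal{E}}_p)$. Under this identification $d_0$ is visibly the \v{C}ech coboundary of $C^*(\mathfrak U,\tilde{\mathcal{E}}_p)$, which fixes $p$ and raises $q$ by one.

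Next, $E_1 = H(E_0,d_0)$ is the \v{C}ech cohomology of $\tilde{\mathcal{E}}_p$ with respect to $\mathfrak U$. Invoking Leray's theorem once more --- $\mathfrak U$ Stein, $\tilde{\mathcal{E}}_p$ coherent --- this coincides with the sheaf cohomology, giving $E^{p,q}_1 = H^{p+q}(M,\tilde{\mathcal{E}}_p)$ and $E_1 = H^*(M,\tilde{\mathcal{E}})$.

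It remains to show $d_{2k+1}=0$. The decisive ingredient is that $\tilde{\mathcal{E}}_p$ is $\mathbb{Z}_2$-homogeneous of parity $\bar p$: both outer terms of $0\to\tilde{\mathcal{O}}_p\otimes(\mathcal{E}_{\red})_{\bar 0}\to\tilde{\mathcal{E}}_p\to\tilde{\mathcal{O}}_{p-1}\otimes(\mathcal{E}_{\red})_{\bar 1}\to 0$ carry parity $\bar p$. Combined with the grading (\ref{grading of H}), a class in $E^{p,q}_r$, represented by a \v{C}ech cochain of degree $p+q$ valued in $\tilde{\mathcal{E}}_p$, has parity $\overline{(p+q)+p}=\bar q$; thus $E^{p,q}_r$ is homogeneous of parity $\bar q$. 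Now $d_r$ is odd and, by (\ref{d_r(E^(p,q)_r)subset}), carries $E^{p,q}_r$ into $E^{p+r,q-r+1}_r$, of parity $\overline{q-r+1}$. For odd $r$ this target parity equals $\bar q$, the same as the source, whereas an odd map must reverse parity; therefore $d_r=0$ for every odd $r$. Consequently $E_{2k+2}=H(E_{2k+1},d_{2k+1})=E_{2k+1}$, and in particular $E_2=E_1$.

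I expect the routine bookkeeping of the $E_0$ step --- tracking the filtration indices and invoking Theorem~B to interchange quotients and cochains --- to be the most error-prone part, but the conceptual crux is the parity argument above: it is precisely the compatibility of the $\mathbb{Z}$-grading with the $\mathbb{Z}_2$-grading (the feature distinguishing this filtration from the one in \cite{Oni_Vi}) that annihilates all odd differentials, so the point requiring genuine care is the verification that each $E^{p,q}_r$ is $\mathbb{Z}_2$-homogeneous of the single parity $\bar q$.
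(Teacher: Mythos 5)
Your proof is correct and follows essentially the same route as the paper, whose proof simply defers to Proposition~3 of \cite{Oni_SPS}: read off $E_0^{p,q}=C^{p+q}(\mathfrak U,\tilde{\mathcal{E}}_p)$ from the definitions (using Theorem~B on the Stein intersections to pass to quotient coefficients), identify $E_1$ via Leray, and kill the odd differentials by the parity count exploiting that $\tilde{\mathcal{E}}_p$ is homogeneous of parity $\bar p$, so that $E_r^{p,q}$ has pure parity $\bar q$ while $d_r$ is odd. That parity argument is precisely the mechanism of Onishchik's proof, here made to work by the compatibility of the $\mathbb{Z}$- and $\mathbb{Z}_2$-gradings of $\tilde{\mathcal{E}}$.
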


\begin{proof} The proof is similar to the proof of Proposition $3$ in~\cite{Oni_SPS}.
\end{proof}

\begin{lem} There is the following identification of
bigraded algebras:
$$
E_{\infty} = \gr H^*(M,\mathcal{E}),\,\,\text{where}\,\,
E_{\infty}^{p,q} = \gr_p H^{p+q}(M,\mathcal{E}).
$$
If $M$ is compact, then
$
\dim H^k(M,\mathcal{E}) = \sum_{p+q = k}\dim  E_{\infty}^{p,q}.
$
\end{lem}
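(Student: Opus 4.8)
The plan is to recognize the statement as the standard convergence theorem for the spectral sequence of a \emph{bounded} filtered cochain complex, applied to $C^*(\mathfrak U,\mathcal{E})$ with the filtration (\ref{filtr ckomplex}), and then to extract the dimension count in the compact case from the finiteness of coherent cohomology. First I would record that (\ref{filtr ckomplex}) is bounded: $C_{(0)} = C^*(\mathfrak U,\mathcal{E})$ and $C_{(m+2)} = 0$, the latter because $\mathcal{E}_{(m+2)} = \mathcal{J}^{m+2}\mathcal{E}_{\bar 0} + \mathcal{J}^{m+1}\mathcal{E}_{\bar 1} = 0$ on an $n|m$-dimensional supermanifold. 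As observed in the construction of $E_\infty$, boundedness already forces $E^{p,q}_r = E^{p,q}_\infty$ for $r\ge r_0 = m+2$, so it remains only to identify this stable term.

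Next I would compute $E^p_\infty$ explicitly. For $r\ge m+2$ one has $C^p_r = Z^p := \{c\in C_{(p)} \mid dc = 0\}$, since $dc\in C_{(p+r)} = 0$; the term $C^{p+1}_{r-1}$ reduces to the cocycle space $Z^{p+1}\subset C_{(p+1)}$, again because $C_{(p+r)} = 0$; and, since $p-r+1\le 0$ forces $C_{(p-r+1)} = C^*(\mathfrak U,\mathcal{E})$, the term $dC^{p-r+1}_{r-1}$ becomes $dC^*(\mathfrak U,\mathcal{E})\cap C_{(p)}$. Hence
$$
E^p_\infty = Z^p\big/\bigl(Z^{p+1} + dC^*(\mathfrak U,\mathcal{E})\cap C_{(p)}\bigr),
$$
with the evident refinement in each bidegree $(p,q)$ obtained by intersecting with $C^{p+q}(\mathfrak U,\mathcal{E})$.

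I would then match this against the associated graded of (\ref{filtr H}). By definition $H(M,\mathcal{E})_{(p)}$ is the image of $H^*(C_{(p)})\to H^*(C^*(\mathfrak U,\mathcal{E}))$, which is canonically $Z^p/\bigl(dC^*(\mathfrak U,\mathcal{E})\cap C_{(p)}\bigr)$; under this identification $H(M,\mathcal{E})_{(p+1)}$ corresponds to the subgroup generated by $Z^{p+1}$, and passing to the quotient reproduces precisely the description of $E^p_\infty$ above. Tracking the \v{C}ech degree $q$ throughout yields $E^{p,q}_\infty\cong \gr_p H^{p+q}(M,\mathcal{E})$. Compatibility with the multiplicative (module) structure needs no separate argument: the filtration (\ref{filtr ckomplex}) together with the $\mathcal{J}$-adic filtration of $\mathcal{O}$ satisfies $\mathcal{J}^a\mathcal{E}_{(b)}\subset \mathcal{E}_{(a+b)}$, so the product respects the filtrations and descends to $E_\infty$.

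Finally, for compact $M$ the groups $H^k(M,\mathcal{E})$ are finite-dimensional by the finiteness of coherent sheaf cohomology, so the induced filtration (\ref{filtr H}) of each $H^k$ is a finite filtration of a finite-dimensional space; additivity of dimension along the chain $H^k = H(M,\mathcal{E})_{(0)}\supset\cdots\supset 0$ gives $\dim H^k = \sum_p \dim \gr_p H^k = \sum_{p+q=k}\dim E^{p,q}_\infty$. I expect the only genuine point requiring care to be the verification that the two quotient presentations of $E^{p,q}_\infty$ and of $\gr_p H^{p+q}(M,\mathcal{E})$ coincide under the canonical identifications; everything else is bounded-filtration bookkeeping together with the standard finiteness theorem.
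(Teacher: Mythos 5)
Your proposal is correct and follows essentially the same route as the paper: the lemma is the standard convergence statement for the spectral sequence of the bounded filtration (\ref{filtr ckomplex}), and the paper simply defers to the identical argument in Onishchik's Proposition 4 of \cite{Oni_SPS}, which is exactly the bounded-filtration bookkeeping you carry out (stabilization of $E^{p,q}_r$ for $r\ge m+2$, identification of the stable term $Z^p/(Z^{p+1}+dC^*\cap C_{(p)})$ with $\gr_p$ of the induced filtration on cohomology, and additivity of dimension over the finite filtration in the compact case). Your explicit verification that the two quotient presentations agree is precisely the content of the cited proof, so there is nothing to add.
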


\begin{proof} The proof is a direct repetition of the proof of Proposition $4$ in~\cite{Oni_SPS}.
\end{proof}

Now we prove our main result concerning the first non-zero
co\-boundary operators among $d_2,\;d_4,\ldots$. Assume that
the isomorphisms of sheaves $\delta_i:
\mathcal{E}|U_i\to\tilde{\mathcal{E}}|U_i$ from Theorem~\ref{teor_main} are defined for each $i\in I$. By Theorem \ref{teor_main}, a locally free sheaf of
$\mathcal{O}$-modules $\mathcal{E}$ on $M$ corresponds
to the cohomology class $a_{\gamma}$ of the 1-cocycle
$((a_{\gamma})_{ij})\in Z^1(\mathfrak U, \mathcal{A}ut_{(2)(2)}
\tilde{\mathcal{E}})$, where $(a_{\gamma})_{ij} =
\delta_i\circ\delta_j^{-1}$. If the order of $(a_{\gamma})_{ij}$ is
equal to $k$, then we may choose $\delta_i,\; i\in I$, in such a way
that $((a_{\gamma})_{ij})\in Z^1(\mathfrak U,\mathcal{A}ut_{(k)(2)}
\tilde{\mathcal{E}})$. We can write $a_{\gamma} = \exp A_{\Gamma}$, where
$A_{\Gamma}\in C^1(\mathfrak U,Der_{(k)(2)} \tilde{\mathcal{E}})$.

We will identify the superspaces $(E_0,d_0)$ and $(C^*(\mathfrak
U, \tilde{\mathcal{E}}),d)$ via the isomorphism of Lemma
\ref{E_0 and E_1=E_2}. Clearly, $\delta_i: \mathcal{E}_{(p)}|U_i\to
\tilde{\mathcal{E}}_{(p)}|U_i = \sum_{r\ge p}\tilde{\mathcal{E}}_r|U_i$ is
an isomorphism of sheaves for all $i\in I,\; p\ge 0$. These local
sheaf isomorphisms permit us to define an isomorphism of graded
cochain groups
$$
\psi: C^*(\mathfrak U,\mathcal{E})\to C^*(\mathfrak U,\tilde{\mathcal{E}})
$$
such that
$$
\psi: C^*(\mathfrak U,\mathcal{E}_{(p)})\to C^*(\mathfrak
U,(\tilde{\mathcal{E}})_{(p)}),\;p\ge 0.
$$
We put
$$
\psi(c)_{i_0\ldots i_q} = \delta_{i_0}( c_{i_0\ldots i_q})
$$
for any $(i_0,\ldots,i_q)$ such that $U_{i_0}\cap\ldots\cap
U_{i_q}\ne \emptyset$. Note that $\psi$ is not an isomorphism of
complexes. Nevertheless, we can explicitly express the coboundary
$d$ of the complex $C^*(\mathfrak U,\mathcal{E})$ by means of $d_0$ and
$a_{\gamma}$.

The following theorem permits to calculate the spectral
sequence $(E_r)$ whenever $d_0$ and the cochain $a_{\gamma}$ are
known. It also describes certain coboundary operators
$d_r,\; r\ge 1$.

\begin{thm} For any $c\in C^*(\mathfrak
U,\tilde{\mathcal{E}}_q) = E_0^q$, we have
$$
(\psi(d\psi^{-1}(c)))_{i_0\ldots i_{q+1}} = (d_0c)_{i_0\ldots
i_{q+1}} + ((a_{\gamma})_{i_0i_1} -\id )(c_{i_1\ldots i_{q+1}}).
$$

Suppose that the locally free sheaf of
$\mathcal{O}$-modules $\mathcal{E}$ on $M$ has order $k$
and denote by $a_{\gamma}$ the cohomology class corresponding to
$\mathcal{E}$ by Theorem \ref{teor_main}. Then $d_r = 0$ for $r =
1,\ldots,k-1$, and $d_{k} = \mu_k(a_{\gamma})$.
\end{thm}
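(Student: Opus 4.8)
The plan is to prove the displayed formula by a direct \v{C}ech computation and then to read off both the vanishing of the low differentials and the value of $d_k$ from the way the transported coboundary $\tilde d := \psi\circ d\circ \psi^{-1}$ decomposes with respect to the filtration.

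For the formula I would simply unwind the definitions. For $c\in C^q(\mathfrak U,\tilde{\mathcal E})$ the cochain $\psi^{-1}(c)$ has components $\delta_{i_0}^{-1}(c_{i_0\ldots i_q})$; applying $d$ and then $\psi$, every term of the alternating sum with $j\ge 1$ keeps $i_0$ as its first index, so the outer $\delta_{i_0}$ cancels the inner $\delta_{i_0}^{-1}$ and the term is unchanged, whereas the $j=0$ term has first index $i_1$ and is multiplied by $\delta_{i_0}\circ\delta_{i_1}^{-1}=(a_\gamma)_{i_0i_1}$. Since the $j=0$ term of $d_0c$ is $c_{i_1\ldots i_{q+1}}$, the difference between $\tilde d c$ and $d_0 c$ is exactly $((a_\gamma)_{i_0i_1}-\id)(c_{i_1\ldots i_{q+1}})$, which is the asserted identity.

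For the second part, write $\tilde d=d_0+D$ with $D$ the twisting term just found. Because $a_\gamma=\exp A_\Gamma$ with $A_\Gamma\in C^1(\mathfrak U,Der_{(k)(2)}\tilde{\mathcal E})$, the operator $(a_\gamma)_{i_0i_1}-\id$ raises the filtration degree by at least $k$, so $D$ sends $C^*(\mathfrak U,\tilde{\mathcal E}_p)$ into $\bigoplus_{p'\ge p+k}C^*(\mathfrak U,\tilde{\mathcal E}_{p'})$ while $d_0$ preserves the degree $p$; thus $d_0$ is precisely the $E_0$-differential and every extra filtration jump of $\tilde d$ is $\ge k$. I would then argue by induction on $r$ that $E_1=\cdots=E_k$ and $d_r=0$ for $1\le r\le k-1$: representing a class of $E_r^{p}$ by a $d_0$-cocycle $c\in C^*(\mathfrak U,\tilde{\mathcal E}_p)$, one has $\tilde d c=Dc\in C_{(p+k)}\subseteq C_{(p+r+1)}$, and since $\tilde d c$ is automatically a $\tilde d$-cocycle it lies in $C^{p+r+1}_{r-1}$ and hence represents $0$ in $E_r^{p+r}$, so $d_r[c]=0$. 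For $r=k$ one decomposes $D=\sum_{j\ge k}D_j$ by exact filtration jump; only the jump-$k$ part $D_kc$ survives in $E_k^{p+k}$, and since the quadratic and higher terms of $\exp A_\Gamma$ jump by $\ge 2k$, the operator $D_k$ is componentwise cup product with $\mu_k(a_\gamma)$, giving $d_k=\mu_k(a_\gamma)$.

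The delicate point, where I expect to spend the most care, is this last identification on $E_k=H^*(M,\tilde{\mathcal E})$. Two facts are needed. First, $\mu_k(a_\gamma)$ must be a \v{C}ech $1$-cocycle with values in $Der_{k,k}\tilde{\mathcal E}$, so that cup product with it descends to cohomology; this comes from the cocycle identity for $a_\gamma$ after passing to leading logarithmic order, the Baker--Campbell--Hausdorff corrections being commutators that jump by $\ge 2k$. Second, the higher part $\sum_{j>k}D_jc$ must vanish in $E_k^{p+k}$: using $\tilde d(Dc)=\tilde d\tilde d c=0$ together with $d_0(D_kc)=0$ (valid because both $\mu_k(a_\gamma)$ and $c$ are $d_0$-cocycles) one finds $\tilde d\bigl(\sum_{j>k}D_jc\bigr)\in C_{(p+2k)}$, which places this cochain in the subgroup that is killed in $E_k^{p+k}$. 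With these two facts the computation of $d_k$ is complete.
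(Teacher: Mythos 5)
The paper offers no argument of its own here---it only defers to the proof of Theorem~7 in the (unpublished) reference \cite{Oni_Vi}---so there is nothing concrete in the text to compare against, but your computation is precisely the standard argument such a citation stands for, and it is correct. In particular, the \v{C}ech unwinding of $\psi\circ d\circ\psi^{-1}$ (only the $j=0$ face changes the leading index and picks up $\delta_{i_0}\circ\delta_{i_1}^{-1}=(a_{\gamma})_{i_0i_1}$), the filtration-jump estimate coming from $a_{\gamma}=\exp A_{\Gamma}$ with $A_{\Gamma}\in C^1(\mathfrak U, Der_{(k)(2)}\tilde{\mathcal{E}})$, and the identification of the exact-jump-$k$ part of $\exp A_{\Gamma}-\id$ with cup product by the $1$-cocycle $\mu_k(a_{\gamma})$ (higher powers jumping by $\ge 2k$) are exactly the ingredients needed, and your treatment of the two delicate points---the cocycle property of $\mu_k(a_{\gamma})$ and the vanishing of the higher-jump remainder in $E_k^{p+k}$---is sound.
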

\begin{proof} The proof is similar to the proof of Theorem $7$ in \cite{Oni_Vi}.\end{proof}

\subsection*{Acknowledgment}
The author is very grateful to A.\,L.\,Onishchik for useful discussions.

\end{document}